\documentclass[11pt,letterpaper]{article}

\usepackage[utf8x]{inputenc}
\usepackage[T1]{fontenc}
\usepackage[english]{babel}

\usepackage{amsmath}
\usepackage{amsthm}
\usepackage{amssymb}
\usepackage{amsfonts}

\usepackage{epsfig}
\usepackage{graphicx}
\usepackage{subcaption}
\usepackage[linesnumbered,ruled,vlined]{algorithm2e}
\usepackage{xcolor}

\usepackage{hyperref}

\usepackage[inline]{enumitem}

\usepackage[margin=1.15in]{geometry}
\linespread{1.0}

\usepackage[toc,page]{appendix}

\usepackage{authblk}


\newtheorem{thm}{Theorem}
\newtheorem{cor}[thm]{Corollary}
\newtheorem{lem}[thm]{Lemma}
\newtheorem{prop}[thm]{Proposition}


\newcommand{\rt}{ {\bf{0}} }
\newcommand{\pr}[1]{ \mathbb{P}\left( #1 \right) }
\newcommand{\EE}[1]{\mathbb{E}\left( #1 \right)}

\newcommand{\rtree}[1]{\mathbb{T}_{#1}}

\newcommand{\RR}{\mathbb{R}}
\newcommand{\GG}{\mathcal{G}}
\newcommand{\PP}{\mathcal{P}}
\newcommand{\Aa}[1]{\mathcal{A}_{#1}}
\newcommand{\ZZ}{\mathbb{Z}}
\newcommand{\Zd}{\mathbb{L}^2}
\newcommand{\Zda}{\mathbb{L}^2_{alt}}
\newcommand{\NN}{\mathbb{N}}
\newcommand{\pprmf}[3]{\Pi_{RMF} \left\langle #1,#2,#3 \right\rangle}
\newcommand{\pprmfd}[2]{\Pi_{RMF} \left\langle #1,#2 \right\rangle}
\newcommand{\ppber}[3]{\Pi_{Ber} \left\langle #1,#2,#3 \right\rangle}
\newcommand{\ppberd}[2]{\Pi_{Ber} \left\langle #1,#2 \right\rangle}
\newcommand{\set}[1]{\left\lbrace #1\right\rbrace}

\newcommand{\hyc}{\mathcal{Q}}
\newcommand{\hycn}[1]{\mathbb{Q}_{#1}}
\newcommand{\floor}[1]{\left\lfloor #1 \right\rfloor}

\newcommand{\tupla}[1]{\left( #1 \right)}
\newcommand{\minC}[2]{\emph{MinMass}(#1,#2)}
\newcommand{\maxC}[2]{\emph{MaxMass}(#1,#2)}
\newcommand{\stdom}[2]{#1 \succeq #2}

\title{RMF accessibility percolation on oriented graphs.}

\author[1,2]{Frank Duque}
\author[1]{Daniel Ramirez-Gomez}
\author[1]{Alejandro Rold\'an-Correa}
\author[1]{Leon A. Valencia}
\affil[1]{Instituto de Matem\'aticas, Universidad de Antioquia, Colombia}
\affil[2]{Escuela de Matem\'aticas, Universidad Nacional de Colombia, Colombia}


\begin{document}
\maketitle

\begin{abstract}
Accessibility percolation is a new type of percolation problem inspired by evolutionary biology: a random number, called its fitness, is assigned to each vertex of a graph, then a path in the graph is accessible if fitnesses are strictly increasing through it.
In the Rough Mount Fuji (RMF) model the fitness function is defined on the graph as $\omega(v)=\eta(v)+\theta\cdot d(v)$, where $\theta$ is a positive number called the drift, $d$ is the distance to the source of the graph and $\eta(v)$ are i.i.d. random variables. In this paper we determine values of $\theta$ for having RMF accessibility percolation on the hypercube and the two-dimensional lattices $\Zd$ and $\Zda$.
\end{abstract}

\maketitle

\section{Introduction} \label{sec:introduction}

Let $G$ be a graph with a distinguished vertex $v_0$, and let $\omega: V(G)\rightarrow \RR$ be a function that assigns to each vertex of $G$ a real number called fitness of the vertex. 
 We say that a path \[P=v_0\rightarrow v_1\rightarrow \cdots \rightarrow v_k\] is accessible for $\omega$, if $\omega(v_{i+1})>\omega(v_i)$ for $i=0,1,\ldots,k-1$.\\

The study of  long accessible paths is related to the classical model for the evolution of an organism that involves mutation and genetic selection, see \cite{WDDH2006, WWC2005}. Nowak and Krug \cite{NK2013} called \textit{accessibility percolation} to the existence of such long accessible paths.  For a complete review on accessibility percolation and its biological motivation see \cite{10.2307/43864013, krug2021accessibility, RZ2013}. The fitness function $\omega$ may be defined in particular random ways, obtaining different accessibility percolation models. The Rough Mount Fuji (RMF) model is obtained when $\omega_{\eta,\theta}(v)=\eta(v)+\theta\cdot d(v)$, where $\eta(v)$ is a collection of i.i.d random variables, $d(v)$ denotes the distance from $v$ to $v_0$ (the source of $G$). The RMF model was first proposed in \cite{RMF-DEF}; see also \cite{JKAJ2011,JGJ2010, Martinsson2014}. Observe that RMF model favors the accessibility percolation when $\theta>0$,  since the drift factor will tend to increase fitness of the vertices throughout paths.

Much of the current accessibility percolation theory is on the unconstrained house of cards (HoC) model, this is a particular case of the RMF model when $\theta=0$; some of those works are \cite{JKAJ2011, FK2012,  Martinsson2014,  kingman1978, Li2017, NK2013, ParkKrug2008, RZ2013}.  About the RMF model, there are fewer results \cite{JKAJ2011,FK2012,  JGJ2010, Martinsson2014,  Neidhart2014, NK2013, SMK2013}.  In this paper, we study the RMF model on the hypercube and the two-dimensional lattices $\Zd$ and $\Zda$.\\

Throughout this paper, $G$ denotes a graph where the edges always point to vertices further away from a vertex $v_0$, called source.
Given a distribution function $F$, we denote by $\eta_{G,F}$ the set of independent and identically distributed random variables $\eta_{G,F}=\set{\eta(v)}_{v\in G}$ where each $\eta(v)$ has distribution $F$. When $G$ and $F$ are understood we write $\eta$ instead of $\eta_{G,F}$.
We adopt the notations used in \cite{Martinsson2014} for $\sim$, $\gtrsim$, $\lesssim$, $O$ and $\Omega$ (see Appendix~\ref{sec:notation}).

Let $G$ be a finite graph. We say that a path $P$ in $G$ is \emph{long}, if it starts at the source and ends at a sink.
Let \[ \pprmfd{G}{F,\theta}= \text{Probability of having a long accessible path for $\omega_{\eta,\theta}$ in $G$}. \]
Given a sequence of finite graphs $\GG=(G_1,G_2,\ldots)$, we say that there is \emph{RMF (accessibility) percolation} in $(\GG,F,\theta)$, if  $\lim_{n\rightarrow \infty}\pprmfd{G_n}{F,\theta}=1$.
We say that $\nu_p=\{\nu_p(v)\}_{v\in V(G)}$ is a Bernoulli process, if $\nu_p$ is a collection of i.i.d random variables and
$\pr{\nu_p(v)=1}=p$ for each $v\in V(G)$. Given a Bernoulli process $\nu_p$, the path $P=v_0\rightarrow v_1\rightarrow \cdots \rightarrow v_k$ is called open if  $\nu_p(v_i)=1$ for $i=0,1,\ldots,k$. Let
\[\ppberd{G}{p}=\text{Probability of having a long open path in } \nu_p.\]
Given a sequence of finite graphs $\GG=(G_1,G_2,\ldots)$, we say that there is \emph{Bernoulli percolation} in $(\GG,p)$, if  $\lim_{n\rightarrow \infty}\ppberd{G}{p}=1$.

The support of this paper is Lemma~\ref{lem:basic_coupling}.
This result is obtained by coupling between the RMF model with the Bernoulli process, that is, by joint constructing the models in the same probability space, in order to gain relations between them
(for more details on concepts of coupling, see \cite{thorisson2000coupling}).
As consequence: we bound the probability of having RMF percolation with the probability of having Bernoulli percolation, see Theorem \ref{thm:coupling}; and we bound the number of accessible paths with  the number of open paths, see Proposition~\ref{prop:coupling}. In particular, we use Theorem~\ref{thm:coupling}, in order to determine bounds for $\theta$ for having RMF accessibility percolation on the hypercube and the two-dimensional lattices $\Zd$ and $\Zda$.

\begin{thm}\label{thm:coupling}
 Let $F$ be any distribution,  $\PP$ be a set of paths in $G$ and $\theta>0$. Then, for any  $p\in [0,1]$ and $x\in \RR$ such that $p\leq F(x+\theta)-F(x)$,
 \[\pprmf{G}{\PP}{F,\theta}\geq\ppber{G}{\PP}{p}; \]
 where $\ppber{G}{\PP}{p}$ denotes the probability of having an open path in $\PP$ for the  Bernoulli process $\nu_p$, and
 $\pprmf{G}{\PP}{F,\theta}$  denotes the probability of having an  accessible path in $\PP$ for the RMF process $(G,\omega_{\eta,\theta})$.
\end{thm}

Stochastic dominance is a partial order between random variables: Given two random variables $X$ and $Y$ we say that $X$ stochastically dominates to $Y$ if, for all $x\in\mathbb{R}$,
\[\pr{X\geq x}\geq \pr{Y \geq x}.\]
This is denoted by $\stdom{X}{Y}$. For more details on concepts of stochastic dominance, see \cite{thorisson2000coupling}.

\begin{prop}\label{prop:coupling}
 Let $F$ be any distribution,  $\PP$ be a set of paths in $G$ and $\theta>0$. Then, for any   $p\in [0,1]$ and $x\in \RR$ such that $p\leq F(x+\theta)-F(x)$,
 \[\stdom{X_{RMF}}{X_B};\]
 where $X_B$ denotes the number of open paths of $\PP$ on the Bernoulli process $\nu_p$, and  $X_{RMF}$ denotes the number of accessible paths of $\PP$ on the RMF process $\tupla{G,\omega_{\eta,\theta}}$.
\end{prop}

\subsection{Results on the Hypercube}

Consider the graph whose vertices are all the binary strings of length $n$, and there is an edge between a pair of vertices if they differ in exactly one bit; this graph is known as the hypercube. We denote by $\hycn{n}$ the oriented hypercube where edges are always directed toward the vertex with the greater number of ones.

For $G_{n}=\hycn{n}$, the source is $\rt=\lbrace 0 \rbrace^n$, the sink is ${\bf 1}= \lbrace 1 \rbrace^n$, and
$\pprmfd{\hycn{n}}{F,\theta}$ denotes the probability of having an accessible path for $\omega_{\eta,\theta}$, that starts at $\rt$ and ends at ${\bf 1}$.
Let $\hyc=\tupla{\hycn{1},\hycn{2},\hycn{3},\ldots}$. Thus there is RMF accessibility percolation in $\tupla{\hyc, F,\theta}$, if  $\lim_{n\rightarrow \infty}\pprmfd{\hycn{n}}{F,\theta}=1$.

In \cite{Martinsson2014}, the authors obtain a Stochastic Domination between the number of accessible paths in the RMF model and the number of open paths in the Bernoulli process, in the case where the probability distribution of $F$ has continuous p.d.f. on its support and it is connected. From this they determine sufficient conditions for having RMF accessibility percolation. See Theorem~\ref{thm:hyc_Martinson}.

\begin{thm}[\cite{Martinsson2014}]\label{thm:hyc_Martinson}
 Let $F$ be any probability distribution whose p.d.f. is continuous on its support and whose support is connected. Let $\theta_n$ be any strictly positive
function of $n$ such that $n\theta_n \rightarrow \infty$ as $n\rightarrow \infty$. Then  $\pprmfd{\hycn{n}}{F,\theta_n}$ tends to one as $n\rightarrow \infty$.
\end{thm}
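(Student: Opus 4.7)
My plan is to combine a stochastic coupling with Bernoulli bond percolation and a moment-method count of accessible paths. For an oriented edge $e=(u,v)$ of $\hycn{n}$, define the local accessibility indicator $X_e=\mathbf{1}_{\{\omega_{\eta,c_n}(v) > \omega_{\eta,c_n}(u)\}} = \mathbf{1}_{\{\eta(v)-\eta(u) > -c_n\}}$, so that a path $P$ from $\rt$ to $\mathbf{1}$ is accessible exactly when $X_e=1$ for every $e\in P$. Since the density of $F$ is continuous with connected support, the density of $\eta(v)-\eta(u)$ is strictly positive at the origin, and a Taylor expansion gives $p_n := \pr{X_e=1} = \tfrac12 + \Theta(c_n)$. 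The indicators $\{X_e\}$ are not jointly independent (adjacent edges share a vertex value), but via the stochastic-domination lemma of Martinsson the probability of an accessible path can be bounded below by the corresponding probability in independent Bernoulli$(p_n)$ bond percolation on $\hycn{n}$.

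Once this reduction is in place, I would count open paths. Let $N_n$ be the number of all-$X_e=1$ paths from $\rt$ to $\mathbf{1}$; then in the dominated Bernoulli model $\EE{N_n} = n!\,p_n^n$. Stirling together with $p_n = \tfrac12 + \Theta(c_n)$ gives
\[
\EE{N_n} \gtrsim \left(\frac{n}{2e}\right)^n \sqrt{2\pi n}\,\exp\!\left(\Theta(n c_n)\right),
\]
which diverges super-exponentially under the hypothesis. To promote this to $\pr{N_n\ge 1}\to 1$ I would apply Paley--Zygmund. A short computation using the independence assumed by the dominated measure gives
\[
\frac{\EE{N_n^2}}{(\EE{N_n})^2} = \EE{p_n^{-|P\cap Q|}},
\]
where $P,Q$ are independent uniformly random paths from $\rt$ to $\mathbf{1}$. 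Standard combinatorics on $\hycn{n}$ yield $\EE{|P\cap Q|}=O(1/n)$, and with some further work one controls the upper tail of $|P\cap Q|$.

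The hard step will be controlling this second moment ratio: since $p_n^{-1}$ is bounded away from $1$, relatively sharp exponential-type tail bounds for $|P\cap Q|$ are required to force $\EE{p_n^{-|P\cap Q|}}\to 1$. I expect the hypothesis $n c_n\to\infty$ to enter here, providing the quantitative buffer that lets the drift kill the correlation contributions of highly overlapping path pairs and separates the effective Bernoulli parameter sufficiently from $\tfrac12$. With that estimate in hand, Paley--Zygmund combined with the Bernoulli domination yields $\pprmfd{\hycn{n}}{F,c_n}\ge \pr{N_n\ge 1}\to 1$, completing the argument.
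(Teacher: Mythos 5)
The overall architecture you propose (reduce to an independent Bernoulli model, then run a first- and second-moment argument on the number of open paths) is the right one, and it is essentially what both Martinsson and this paper do. But your reduction step is wrong, and the error is fatal rather than cosmetic. You claim that the edge indicators $X_e=\mathbf{1}\{\eta(v)-\eta(u)>-c_n\}$ are dominated from below by i.i.d.\ Bernoulli bond percolation with the marginal parameter $p_n=\pr{X_e=1}=\tfrac12+\Theta(c_n)$. No such domination holds: along a single path $v_0\to\cdots\to v_n$ these indicators are strongly negatively dependent, and the probability that all of them equal $1$ is of order $1/(n+1)!$ when $c_n=0$ (all orderings of the $n+1$ i.i.d.\ labels are equally likely), which is far smaller than $2^{-n}$. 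A domination at parameter $\tfrac12+\Theta(c_n)$ would force the path probability to be at least $(\tfrac12+\Theta(c_n))^{n}$, which already fails at $c_n=0$ and, by continuity, for small $c_n>0$. The cleanest symptom: your first moment $n!\,p_n^{\,n}$ diverges even with $p_n=1/2$, so your argument never genuinely uses $nc_n\to\infty$ and, taken at face value, would prove that the House-of-Cards model ($c=0$) percolates on the hypercube; but there the expected number of accessible $\rt\to{\bf 1}$ paths is $n!/(n+1)!=1/(n+1)\to 0$, so that probability tends to $0$, not $1$.

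The correct coupling --- the one in Lemma~\ref{lem:coupling} of this paper, and in spirit the one behind Proposition~\ref{prop:hyc_Martinson} --- is a \emph{site} coupling with a much smaller parameter: declare a vertex open iff $\eta(v)$ lands in a fixed (or level-by-level shifting) interval of length $c_n$ (resp.\ $c_n/2$), so that openness of consecutive vertices forces $\eta(v_{i+1})-\eta(v_i)>-c_n$ deterministically. Openness is then genuinely i.i.d.\ across vertices with parameter $p_n=F(x+c_n)-F(x)=\Theta(c_n)$, the expected number of open $\rt\to{\bf 1}$ paths is $n!\,p_n^{\,n-1}\approx \sqrt{2\pi n}\,(np_n/e)^{n}/p_n$, and the hypothesis $nc_n\to\infty$ is exactly what makes this diverge. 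Your second-moment outline (the overlap $|P\cap Q|$ of two uniform paths, $\EE{|P\cap Q|}=O(1/n)$, tail control via the $T(n,k)$ counts) is the right machinery and is what Lemma~\ref{lem:2BernoulliHypercube} carries out, but it has to be run at parameter $\Theta(c_n)$, where $np_n\to\infty$ is genuinely needed, not at $\tfrac12+\Theta(c_n)$. Note also that the endpoints $\rt$ and ${\bf 1}$ need separate treatment (hence the $\pm\infty$ labels in Theorem~\ref{thm:hypercube}, or the shifting-window construction exploiting the continuous-density hypothesis of the present statement), a point your sketch does not address.
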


In the following result, we establish another version of Theorem \ref{thm:hyc_Martinson}, without  constraints on the distribution used in the RMF model.

\begin{thm} \label{thm:hypercube}
 Let $F$ be any distribution. Let $\theta_n$ be any strictly positive
function of $n$ such that $n\theta_n \rightarrow \infty$ as $n\rightarrow \infty$. If $\rt$ is labeled with $-\infty$ and ${\bf 1}$ is labeled with $\infty$ then $\pprmfd{\hycn{n}}{F,{\theta_n}}$ tends to one as $n\rightarrow \infty$.
\end{thm}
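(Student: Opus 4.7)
The plan is to reduce Theorem~\ref{thm:hypercube} to a statement about Bernoulli percolation on the oriented hypercube $\hycn{n}$, using the coupling between the RMF model and Bernoulli processes established in Section~\ref{sec:mainLema}. That coupling is the heart of the argument: once in hand, every large open path in the Bernoulli process is automatically accessible for $\omega_{\eta,c_n}$, so $\pprmfd{\hycn{n}}{F,c_n}$ is bounded below by the probability of a source-to-sink open path in the coupled Bernoulli process.

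First I would apply the coupling at drift $c_n$ to obtain a Bernoulli process on $\hycn{n}$ with some parameter $p_n=p(F,c_n)$. The decisive simplification enabled by the labels $\omega(\rt)=-\infty$ and $\omega({\bf 1})=+\infty$ is that the first edge $\rt\to v_1$ and the last edge $v_{n-1}\to {\bf 1}$ of any source-to-sink path are automatically accessible, so the coupling need only control the $n-2$ intermediate edges. This is precisely what allows us to drop the regularity hypotheses on $F$ required in Theorem~\ref{thm:hyc_Martinson}: the two extreme levels of the hypercube, at which atoms or gaps in $F$ would otherwise cause trouble, are removed from the problem by fiat.

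Next I would match the hypothesis $nc_n\to\infty$ with a percolation hypothesis $np_n\to\infty$ on the Bernoulli side. The coupling of Section~\ref{sec:mainLema} should be built so that $p_n$ scales at least linearly in $c_n$ for small $c_n$, forcing $np_n\to\infty$ whenever $nc_n\to\infty$. I would then invoke the Bernoulli-percolation estimate on $\hycn{n}$ already available from \cite{Martinsson2014}: when $np\to\infty$, a Bernoulli process on $\hycn{n}$ with parameter $p$ produces an open path from $\rt$ to ${\bf 1}$ with probability tending to $1$. Chaining the coupling with this estimate yields $\pprmfd{\hycn{n}}{F,c_n}\to 1$.

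The main obstacle is producing a coupling that is valid for a completely arbitrary distribution $F$. When $F$ has atoms or disconnected support, the interval-mass $\minC{F}{c}$ used in Proposition~\ref{prop:hyc_Martinson} can vanish, and Martinsson's argument collapses precisely at the boundary levels of the hypercube. The circumvention, to be carried out in Section~\ref{sec:mainLema}, is to perform the coupling through the quantile transform $\eta(v)=F^{-1}(U_v)$ with $U_v$ i.i.d.\ Uniform on $[0,1]$, producing a Bernoulli parameter governed only by thresholds on the uniform scale; the pathological boundary behavior of $F$ is then absorbed by the $\pm\infty$ labels at $\rt$ and ${\bf 1}$, which is the reason those labels appear in the statement.
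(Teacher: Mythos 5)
Your high-level architecture matches the paper's: couple the RMF process to a Bernoulli process via Lemma~\ref{lem:coupling}, note that the labels $-\infty$ at $\rt$ and $+\infty$ at ${\bf 1}$ make the two boundary vertices automatically present so that only interior vertices matter, and then invoke the second-moment Bernoulli result (Lemma~\ref{lem:BernoulliHypercube}) under $np_n\to\infty$. But there is a genuine gap at the single step that carries all the difficulty for an arbitrary $F$: you assert that ``the coupling should be built so that $p_n$ scales at least linearly in $c_n$,'' i.e.\ that for some fixed $m>0$ one can find $x_n$ with $F(x_n+c_n)-F(x_n)\geq m c_n$ for all $n$, and you never prove it. Your proposed mechanism --- the quantile transform $\eta(v)=F^{-1}(U_v)$ with thresholds on the uniform scale --- does not deliver it: a set of uniform-probability $p$ corresponds under $F^{-1}$ to a set whose diameter in the $\eta$-scale can far exceed $c_n$ (e.g.\ when $F$ has a gap in its support), and the coupling needs the $\eta$-values of consecutive interior vertices to differ by less than $c_n$ so that the drift compensates. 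The $\pm\infty$ labels absorb nothing at interior vertices, so this is not a circumvention of the problem but a restatement of it.

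The paper closes exactly this gap with a dyadic bisection argument. It first replaces $c_n$ by $\epsilon_n=2^{\floor{\log_{2}(c_n/c_1)}}c_1\in(c_n/2,\,c_n]$, so every $\epsilon_n$ is a dyadic fraction of $\epsilon_1$ and $n\epsilon_n\to\infty$ still holds. It then fixes any interval $I_1=(x_1,x_1+\epsilon_1]$ of positive mass $m\epsilon_1$ and repeatedly halves it, always keeping the half carrying at least half the mass; after $k$ steps the interval has length $2^{-k}\epsilon_1$ and mass at least $2^{-k}m\epsilon_1$, which for $2^{-k}\epsilon_1=\epsilon_n$ gives precisely $F(x_n+\epsilon_n)-F(x_n)\geq m\epsilon_n=p_n$ with $np_n\to\infty$. (A pigeonhole over a bounded interval of positive mass would serve the same purpose.) Without this step --- or some substitute for it --- your argument does not go through for a general distribution $F$, which is the whole point of the theorem relative to Theorem~\ref{thm:hyc_Martinson}.
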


Given a distribution $F$ with  p.d.f. $f$ and a constant $\theta$, provided it exist, we denote by $\minC{F}{\theta}$ the minimal probability mass of $f$ on intervals with length $\theta$:
\[\minC{F}{\theta}=\min\set{\int_{I\subset Supp(f)}f(x)d(x): I \text{ is an interval with length } \theta}.\]

Given a distribution $F$ and a constant $\theta$, we denote by $\maxC{F}{\theta}$ the  supremum of the  probability mass of $F$ on intervals with length $\theta$:
\[\maxC{F}{\theta}=\sup\set{F(x+\theta)-F(x): x\in\RR}.\]

As a consequence of the construction used in \cite{Martinsson2014}, in order to prove Theorem~\ref{thm:hyc_Martinson},
the authors bound the number of long paths in the hypercube; see Proposition~\ref{prop:hyc_Martinson}. Using Proposition~\ref{prop:coupling},  we obtain another version of Proposition~\ref{prop:hyc_Martinson}, without constraints on the distribution used in the RMF model.

\begin{prop}[\cite{Martinsson2014}]\label{prop:hyc_Martinson}
 Suppose that $F$ has p.d.f. $f$ with connected and bounded support and $f$ is continuous on its support. Let $\theta>0$ be a constant and let $X=X(n)$ be the number of long accessible paths in $(\hycn{n},\omega_{\eta,\theta})$. Then
 \[X\gtrsim n!\cdot C^{n-1}\]
 where $C=\minC{F}{\theta/2}$.
\end{prop}

\begin{prop} \label{prop:hypercube:negligible01}
 Let $F$ be any distribution, let $\theta>0$ be constant, and let $X=X(n)$ be the number of long accessible paths in $(\hycn{n},\omega_{\eta,c})$.
 If $\rt$ is labeled with $-\infty$ and ${\bf 1}$ is labeled with $\infty$ then \[X\gtrsim n!\cdot C^{n-1}\] where $C=\maxC{F}{\theta}$.
\end{prop}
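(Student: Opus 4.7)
The plan is to mirror the argument used for the first part of Proposition~\ref{prop:n_ary}, replacing Lemma~\ref{lem:Bernoulli:n-ary} by its hypercube analogue Lemma~\ref{lem:BernoulliHypercube}. The coupling Lemma~\ref{lem:coupling} is precisely what converts a lower bound on the number of open paths in a Bernoulli site percolation on $\hycn{n}$ into a lower bound on the number of accessible paths in the RMF model, so the task reduces to producing a good Bernoulli parameter and then counting.

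First I would fix $\epsilon>0$. By definition of $C=\maxC{F}{c}$ there exists $x_\epsilon\in\RR$ with
\[ F(x_\epsilon+c)-F(x_\epsilon) \;>\; C-\epsilon. \]
Applying Lemma~\ref{lem:coupling} with $p=C-\epsilon$ and $\PP$ equal to the set of paths in $\hycn{n}$ from $\rt$ to ${\bf 1}$, I obtain
\[ X_{RMF} \;\gtrsim\; X_{Ber(C-\epsilon)}, \]
where $X_{Ber(C-\epsilon)}$ counts open $\rt$-to-${\bf 1}$ paths in the Bernoulli process on $\hycn{n}$ with parameter $C-\epsilon$. Here the hypothesis that $\rt$ is labeled $-\infty$ and ${\bf 1}$ is labeled $\infty$ is what allows me to treat $\rt$ and ${\bf 1}$ as \emph{always present}, matching exactly the boundary setup of Lemma~\ref{lem:BernoulliHypercube}: with $\eta(\rt)=-\infty$ the increment to the next vertex is automatically positive, and symmetrically for ${\bf 1}$, so no constraint on their $\nu$-values is needed.

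Next, Lemma~\ref{lem:BernoulliHypercube}(1) applied with $p_n=C-\epsilon$ (a constant, so in particular $np_n\to\infty$) gives
\[ X_{Ber(C-\epsilon)} \;\sim\; n!\,(C-\epsilon)^{n-1}. \]
Combining with the previous display,
\[ X \;\gtrsim\; n!\,(C-\epsilon)^{n-1}. \]
Since this holds for every $\epsilon>0$, taking the supremum over $\epsilon$ yields
\[ X \;\gtrsim\; \sup_{\epsilon>0}\, n!\,(C-\epsilon)^{n-1} \;=\; n!\cdot C^{n-1}, \]
which is the claim.

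The only delicate point I anticipate is the handling of the endpoint labels. Lemma~\ref{lem:coupling} is stated for an RMF process with i.i.d.\ fields, so strictly speaking I am applying it to the subcube obtained by contracting $\rt$ and ${\bf 1}$ (or equivalently to an RMF process conditioned on $\eta(\rt)$ arbitrarily small and $\eta({\bf 1})$ arbitrarily large), and then noting that the monotone increment of $\omega_{\eta,c}$ along $\rt\to v_1$ and $v_{n-1}\to{\bf 1}$ is automatic under those boundary labels. Everything else is a direct transcription of the $n$-ary argument to the hypercube setting.
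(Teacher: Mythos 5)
Your proposal is correct and follows essentially the same route as the paper's own proof: fix $\epsilon>0$, use the definition of $\maxC{F}{c}$ to get a Bernoulli parameter $C-\epsilon$, invoke Lemma~\ref{lem:coupling} to dominate the Bernoulli path count, apply Lemma~\ref{lem:BernoulliHypercube}(1) to get $X_{Ber(C-\epsilon)}\gtrsim n!(C-\epsilon)^{n-1}$, and let $\epsilon\to 0$. Your extra remark on why the $\pm\infty$ boundary labels match the ``$\rt$ and ${\bf 1}$ present'' convention of Lemma~\ref{lem:BernoulliHypercube} is a correct clarification of a point the paper leaves implicit.
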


 We also modify the proof of Proposition~\ref{prop:hyc_Martinson} introduced in \cite{Martinsson2014}, obtaining another coupling and improving the (asymptotic) bound for the number of long paths.

\begin{prop}\label{prop:hypercube:nicefdp}
 Let $F$ be any distribution, let $\theta>0$ be constant, and let $X=X(n)$ be the number of long accessible paths in $(\hycn{n},\omega_{\eta,\theta})$.  If $F$ has p.d.f. $f$ with connected and bounded support and $f$ is continuous on its support, then
  \[X \sim \Omega(n!\cdot C^{n-1})\]
 where $C=\maxC{F}{\theta}$.
\end{prop}

\subsection{Results on infinite graphs}

Let $G$ be an infinite graph. We say that a path $P$ in $G$ is long, if it starts at the source and is infinite.
Like the finite case, we denote by $\pprmfd{G}{F,c}$ the probability of having a long accessible path for $\omega_{\eta,c}$ in $G$. Unlike the finite case, we say that there is RMF (accessible) percolation in $(G,F,c)$ if $\pprmfd{G}{F,c}>0$.

Some infinite graphs that had been of interest for the study of directed percolation are regular trees, $\mathbb{L}^n$ and $\mathbb{L}^n_{alt}$ \cite{DirPercZd2002}. A $d$-directed regular tree is a tree where each vertex has $d$ children. $\Zd$ is the graph obtained from the first quadrant of the two–dimensional lattice, whose bonds are oriented in the positive x and y directions.
$\Zda$ is the graph whose vertices are the $(x,y)\in \ZZ^2$ such that $y\geq 0$ and $|x|\leq y$; and there is a directed edge from $(x_1,y_1)$ to $(x_2,y_2)$ if $y_2=y_1+1$ and $|x_1-x_2|\leq 1$. See Figure~\ref{fig:L2yL2a} for illustrations of $\Zd$ and $\Zda$.

\begin{figure} [htb]
	\begin{subfigure}{.5\textwidth}
		\centering
		\includegraphics[width=.54\linewidth]{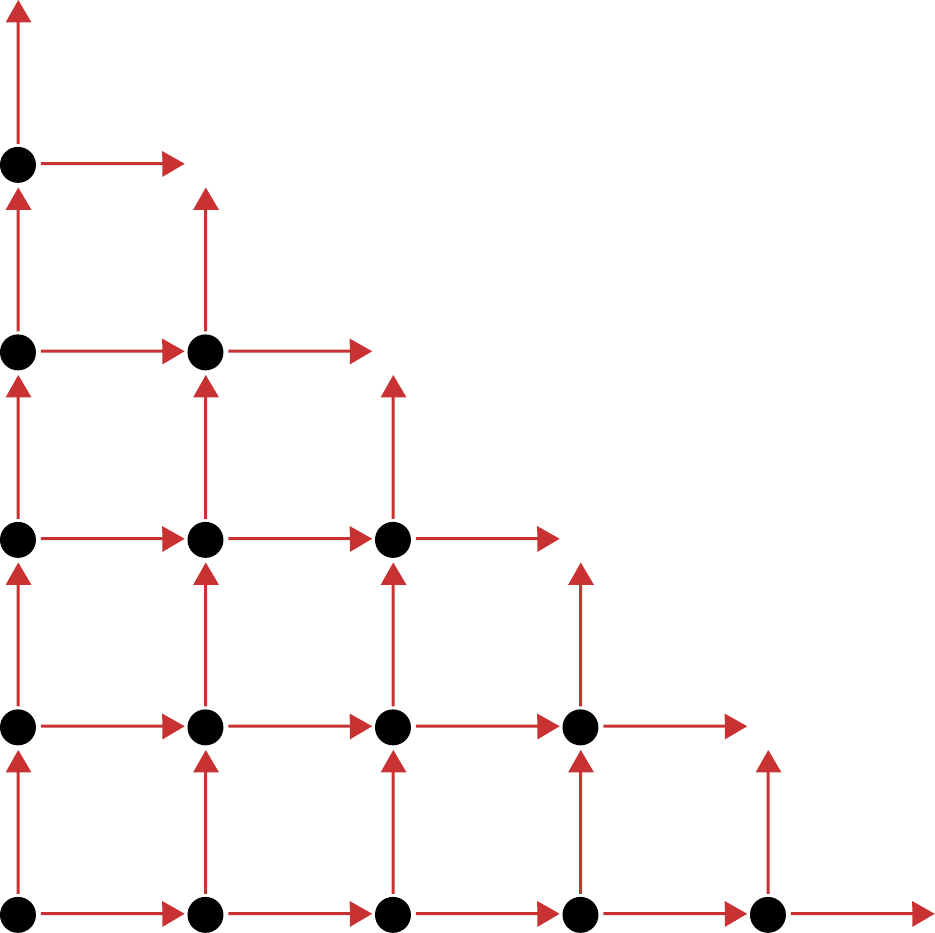}
		\caption{$\Zd$}
		\label{fig:L2}
	\end{subfigure}%
	\begin{subfigure}{.5\textwidth}
		\centering
		\includegraphics[width=.9\linewidth]{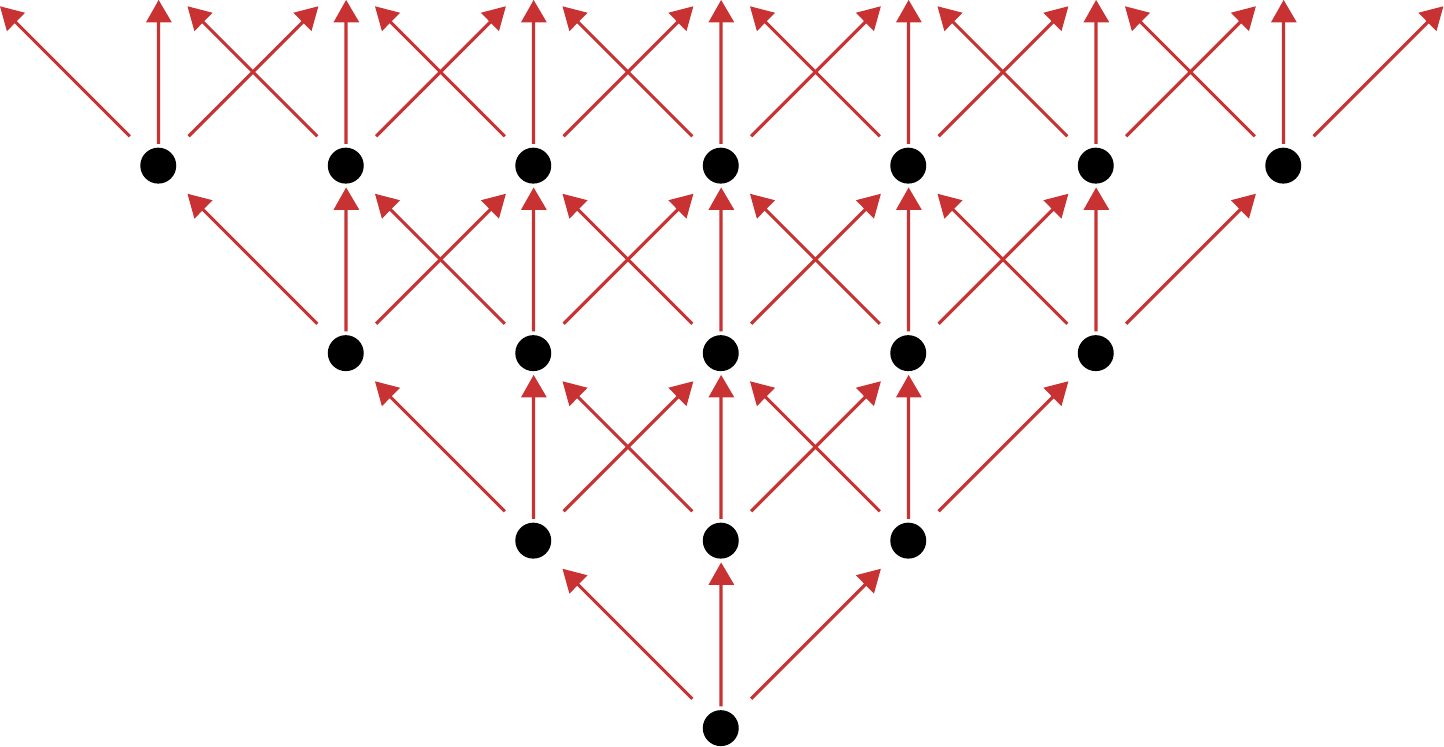}
		\caption{$\Zda$}
		\label{fig:L2a}
	\end{subfigure}%
	\caption{Ilustrations of $\Zd$ and $\Zda$. }
	\label{fig:L2yL2a}
\end{figure}

In Section~\ref{sec:inf:graphs}, we consider the problem of determining a threshold for having RMF percolation on infinite graphs: regular trees, $\Zd$ and $\Zda$. We determine bounds for $\theta$ for having RMF percolation in $(\Zd,F,\theta)$ and in $(\Zda,F,\theta)$; and we show computational results, from which we estimate the threshold for having RMF percolation in regular trees, $\Zd$ and $\Zda$.

This paper is organized as follows. First, in Section~\ref{sec:mainLema}, we build a coupling between the RMF model and the Bernoulli process, from which Theorem~\ref{thm:coupling} and Proposition~\ref{prop:coupling} are concluded. Then we prove the results on the hypercube: we prove Theorem~\ref{thm:hypercube} and Proposition~\ref{prop:hypercube:negligible01} in Section~\ref{sec:proofs:hypercube:01}, and we prove Proposition~\ref{prop:hypercube:nicefdp} in Section~\ref{sec:proofs:hypercube:nicefdp}. Finally, in Section~\ref{sec:inf:graphs} we prove the results on infinite graphs.

\section{A coupling with site percolation} \label{sec:mainLema}

In this section we prove that the RMF model is bounded by a Bernoulli process. First in Lemma~\ref{lem:basic_coupling}, we build a coupling between the RMF model and the Bernoulli process, where open paths are accessible paths. Then, Theorem~\ref{thm:coupling} and Proposition~\ref{prop:coupling} are obtained as consequence of Lemma~\ref{lem:basic_coupling}.

\begin{lem}\label{lem:basic_coupling}
 Let $\theta>0$ and let $\omega_{\eta,\theta}$ be a RMF model on a graph $G$. Let $F$ be the distribution of $\eta$.
 Then, for any $p\in [0,1]$ and $x\in \RR$ such that $p\leq F(x+\theta)-F(x)$, there exists a Bernoulli process $\nu_p$, in the same probability space than $\omega_{\eta,\theta}$, such that open paths in $\nu_p$ are accessible paths in $\omega_{\eta,\theta}$.
\end{lem}

\begin{proof}
We may assume that there exist $p$ and $x_{\theta}$ such that $0<p= F(x_{\theta}+\theta)-F(x_{\theta})$, the case $p< F(x_{\theta}+\theta)-F(x_{\theta})$ follow from this one.
Let
\[\nu_p(v)=\begin{cases}
          1 & \text{ if } x_{\theta}<\eta(v)\leq x_{\theta}+\theta, \\
          0 & \text{ otherwise.}  \\
         \end{cases}
\]

Note that $\nu_p$ is a Bernoulli process. Let
\[P=v_0\rightarrow v_1\rightarrow \cdots \rightarrow v_k\] be an open path in $\nu_p$. For obtaining the coupling, it remains to prove that $P$ is also an accessible path in $\omega_{\eta,\theta}$. Let $0\leq i \leq k-1$. Since the edges of $G$ are always pointing to vertices further away from a vertex $v_0$, $d(v_{i+1})=d(v_{i})+1$; and as $P$ is open,  $x_{\theta}<\eta(v_{i})\leq x_{\theta}+\theta$ and  $x_{\theta}< \eta(v_{i+1})\leq x_{\theta}+\theta$. Thus

\begin{align*}
 \omega_{\eta,\theta}(v_{i+1})-\omega_{\eta,\theta}(v_{i})=&\eta(v_{i+1})-\eta(v_i)+\theta\cdot d(v_{i+1})- \theta\cdot d(v_i)\\
 =& \eta(v_{i+1})-\eta(v_i)+\theta\\
 >& -\theta+\theta=0.
\end{align*}

Therefore, open paths in $\nu_p$ are accessible paths in $\omega_{\eta,\theta}$
\end{proof}

Theorem~\ref{thm:coupling} and Proposition~\ref{prop:coupling} follows from Lemma~\ref{lem:basic_coupling}.

\section{Proof of Theorem~\ref{thm:hypercube} and Proposition~\ref{prop:hypercube:negligible01}}\label{sec:proofs:hypercube:01}

In this section we prove Theorem~\ref{thm:hypercube} and Proposition~\ref{prop:hypercube:negligible01}. First, in Corollary~\ref{lem:BernoulliHypercube}, we consider the case of having Bernoulli percolation when $\rt$ and ${\bf 1 }$ are open. Then, we use Theorem~\ref{thm:coupling} and Corollary~\ref{lem:BernoulliHypercube} in order to prove Theorem~\ref{thm:hypercube}, and we use Proposition~\ref{prop:coupling} and Corollary~\ref{lem:BernoulliHypercube} in order to prove Proposition~\ref{prop:hypercube:negligible01}.

The reason for choosing $\rt$ and ${\bf 1 }$ open is because: the probability of having open paths, is limited by the probability of having the vertices $\rt$ and ${\bf 1 }$ open; and we require some Bernoulli process  on $\hyc=\tupla{\hycn{1},\hycn{2},\hycn{3},\ldots}$, such that, the limit of the probabilities of having open paths tends to $1$.

\begin{cor}\label{lem:BernoulliHypercube}
 Let $\nu_{p_n}$ be a process on $\hycn{n}$, where $\rt$ and ${\bf 1}$ are present and where every other vertex is present with probability $p$. Let $\ppberd{\hycn{n}}{p_n}$ denotes the probability of having an open path of $\hycn{n}$ from $\rt$ to ${\bf 1}$. Let $Y=Y_{n,p_n}$ be the number of open paths from $\rt$ to ${\bf 1}$ where  $\nu_{p_n}$ is as above. Then
 \begin{enumerate}
  \item \label{lem:BernoulliHypercube:1} $Y\sim\EE{Y}=n!p_n^{n-1}\approx \frac{ \sqrt{2\pi n} }{p_n} \left( \frac{np_n}{e}\right)^n$.
  \item \label{lem:BernoulliHypercube:2} If $np_n \rightarrow \infty$ as $n\rightarrow \infty$ then $Var(Y)=\EE{Y}+ o(\EE{Y}^2)$.
  \item \label{lem:BernoulliHypercube:3} If $np_n \rightarrow \infty$ as $n\rightarrow \infty$ then $\ppberd{\hycn{n}}{p_n}$ tends to one as $n\rightarrow \infty$.
 \end{enumerate}
\end{cor}

Corollary~\ref{lem:BernoulliHypercube} is proved in more general statements in Lemma~\ref{lem:2BernoulliHypercube}, such that Corollary~\ref{lem:BernoulliHypercube} can be obtained as a particular case of Lemma~\ref{lem:2BernoulliHypercube}, when $n_0=0$. Results \ref{lem:BernoulliHypercube:1} and \ref{lem:BernoulliHypercube:2} in Corollary~\ref{lem:BernoulliHypercube} were proved in \cite{Martinsson2014} (in Proposition~3.1); for an alternative proof of \ref{lem:BernoulliHypercube:3} in Corollary~\ref{lem:BernoulliHypercube}, see the proof of \ref{lem:BernoulliHypercube:3} of Lemma~\ref{lem:2BernoulliHypercube}.

\begin{proof}[\bf Proof of Proposition~\ref{prop:hypercube:negligible01}]
 Let  $X_{RMF}$ denotes the number of accessible long paths on the RMF process $\tupla{\hycn{n},\omega_{\eta,\theta}}$.
 Let $\epsilon>0$ and let $x_{\epsilon}\in \RR$ be such that
 \[C-\epsilon< F(x_{\epsilon}+\theta)-F(x_{\epsilon}) \leq C.\]

 If $X_{Ber(C-\epsilon)}$ denotes the number of open long paths on the Bernoulli process $\nu_{(C-\epsilon)}$, then: by Proposition~\ref{prop:coupling},
  \[\stdom{ X_{RMF} }{ X_{Ber(C-\epsilon)} };\]
and by (1) in Corollary~\ref{lem:BernoulliHypercube},
\[X_{Ber(C-\epsilon)}\gtrsim n!(C-\epsilon)^{n-1}.\]
Thus we have that, for any $\epsilon >0$  \[X_{RMF}\gtrsim n!(C-\epsilon)^{n-1}.\]

Therefore
\[X_{RMF}\gtrsim \sup_{\epsilon>0} \set{n!(C-\epsilon)^{h-1}}=n!C^{n-1}.\]
\end{proof}

\begin{proof}[{\bf Proof of Theorem~\ref{thm:hypercube}}]
 Let $F$ be any probability distribution. Let $\theta_n$ be any strictly positive
function of $n$ such that $n\theta_n \rightarrow \infty$ as $n\rightarrow \infty$. Here we assume that  $\rt$ is labeled with $-\infty$ and ${\bf 1}$ is labeled with $\infty$, and we prove that $\pprmfd{\hycn{n}}{F,\theta_n}$ tends to one as $n\rightarrow \infty$.

 Note that we may assume that $\theta_1>0$ and $\theta_n\leq \theta_1$ for $n\geq1$; other cases follow from this one.  Let $\epsilon_n=2^{\floor{\log_{2}\left( \frac{\theta_n}{\theta_1}\right)}}\theta_1$. Note that $\frac{\theta_n}{2}<\epsilon_n\leq \theta_n$  and $n\epsilon_n \rightarrow \infty$ as $n\rightarrow \infty$. As $\pprmfd{\hycn{n}}{F,\theta_n} \geq \pprmfd{\hycn{n}}{F,\epsilon_n}$ it is enough to prove that $\pprmfd{\hycn{n}}{F,\epsilon_n}$ tends to one as $n\rightarrow \infty$.

 Let $x_1$ be such that $F(x_1+\epsilon_1)-F(x_1)>0$ and let $m$ be such that \[F(x_1+\epsilon_1)-F(x_1)=m\epsilon_1>0.\]
 Let $p_n=m\epsilon_n$. Note that $np_n \rightarrow \infty$ as $n\rightarrow \infty$. Thus, by \ref{lem:BernoulliHypercube:3} in Corollary~\ref{lem:BernoulliHypercube}, $\ppberd{\hycn{n}}{p_n}$ tends to one as $n\rightarrow \infty$.

 By Theorem~\ref{thm:coupling}, if there exists a sequence $\set{x_n}_{n\in \NN}$ such $F(x_n+\epsilon_n)-F(x_n)\geq p_n$, then
 $\pprmfd{\hycn{n}}{F,\epsilon_n}\geq \ppberd{\hycn{n}}{p_n}$ and $\pprmfd{\hycn{n}}{F,\epsilon_n}$ tends to one as $n\rightarrow \infty$. Thus, it is enough to prove the existence of $x_n$ for $n=1,2,\ldots$, such that
 \begin{equation} \label{eq:proof:thm:hypercube}
  F(x_n+\epsilon_n)-F(x_n)\geq p_n.
 \end{equation}

 Note that, by definition of $x_1$ and $\epsilon_1$, $x_1$ satisfies Inequality~(\ref{eq:proof:thm:hypercube}); thus,
 $F$ has a mass of probability of at least $p_1=m\epsilon_1$ on the interval $(x_1,x_1+\epsilon_1]$.  In the following we recursively halve some intervals keeping the interval with more mass.

 Let $I_1=(x_1,x_1+\epsilon_1]$. Let $I_k$ be the sequence of intervals such that, if $I_k=(a,b]$ and $c=(a+b)/2$ then
 \[I_{k+1}=\begin{cases}
            (a,c] & \text{ if } F(c)-F(a)\geq F(b)-F(c),\\
            (c,b] & \text{ if } F(c)-F(a) < F(b)-F(c).\\
           \end{cases}
 \]
 It follows by recursion that, if $I_k=(a_k,b_k]$ then: $b_k-a_k=2^{-(k-1)}\epsilon_1$ and $F(b_k)-F(a_k)\geq  2^{-(k-1)}m\epsilon_1$.
 By definition of $\epsilon_n$, $\epsilon_1=\theta_1$ and, for each $n$ there exists $k$ such that $\epsilon_n=2^{-(k-1)}\epsilon_1$.
 Thus, for each $n$ there exist a $k$ such that $\epsilon_n=b_k-a_k$ and \[F(a_k+\epsilon_n)-F(a_k)= F(b_k)-F(a_k)\geq  2^{-(k-1)}m\epsilon_1=m\epsilon_n=p_n.\]
 The result follows from choosing $x_n$ equals to the $a_k$ such that $\epsilon_n=b_k-a_k$.
\end{proof}

\section{Proof of Proposition~\ref{prop:hypercube:nicefdp}}\label{sec:proofs:hypercube:nicefdp}

In this section we prove Proposition~\ref{prop:hypercube:nicefdp} by a coupling with another kind Bernoulli process.
Given a rooted graph $G$, the $\ell$-th level of $G$ is the set of vertices whose distance to the root is $\ell$.
Throughout this section the Bernoulli processes may have different probabilities at different levels:
we say that $\nu=\{\nu(v)\}_{v\in V(G)}$ is a Bernoulli process, if $\nu$ is a collection of i.i.d random variables such that vertices at the same level have the same probability to be open. Like in the other sections, the path $P=v_0\rightarrow v_1\rightarrow \cdots \rightarrow v_k$ is called open, if  $\nu=1$ for $i=0,1,\ldots,k$.

\begin{lem}\label{lem:2BernoulliHypercube}
 Let $n_0$ be a constant and take $n\geq n_0$. Let $\nu_{p_n,\widetilde{p},n_0}$ be a Bernoulli process on $\hycn{n}$ such that: $\rt$ and ${\bf 1}$ are open, there are $n_0$ levels whose vertices have $\widetilde{p}$ probability to be open, and vertices in other levels have probability $p_n$ to be open.
 Let $\ppberd{G}{p_n,\widetilde{p},n_0}$ be the probability of having a long open path in $\nu_{p,\widetilde{p},n_0}$, and
 let $Y=Y_{n,p_n}$ be the number of open paths from $\rt$ to ${\bf 1}$ in  $\nu_{p_n\widetilde{p},n_0}$. Then
 \begin{enumerate}
  \item $Y\sim\EE{Y}=n!\widetilde{p}^{n_0}p_n^{n-n_0-1}
  =\Omega(n!p_n^n).$
  \item If $np_n \rightarrow \infty$ as $n\rightarrow \infty$ then $Var(Y)=o(\EE{Y}^2)$.
  \item \label{lem:2BernoulliHypercube:3} If $np_n \rightarrow \infty$ as $n\rightarrow \infty$ then $\ppberd{\hycn{n}}{p_n,\widetilde{p},n_0}$ tends to one as $n\rightarrow \infty$.
 \end{enumerate}
\end{lem}

\begin{proof}
Let $\Aa{}$ denotes the set of paths in $\hycn{n}$ from $\rt$ to ${\bf 1}$.
As $|\Aa{}|=n!$ and each path in $\Aa{}$ is open with probability $\widetilde{p}^{n_0}p_n^{n-n_0-1}$ then
\[\mu=\EE{Y}=n!\widetilde{p}^{n_0}p_n^{n-n_0-1}.\]

Before we prove (1), we prove (2): $Var(Y)=o(\mu^2)$. Let define an indicator variables $Y_i$ for each path $i\in \Aa{}$ by
\[Y_i=
\begin{cases}
 1, & \text{ if the $i$-th path is open.}\\
 0, & \text{ other case.}
\end{cases}
\]

Fix any path $i_0$ in $\Aa{}$. For $0\leq k\leq n-1$, let $\Aa{k}$ denotes the set of paths in $\Aa{}$ that intersect $i_0$ in $k$ interior vertices. Thus

\begin{align*}
 Var(Y)&=\EE{Y^2}-\mu^2=-\mu^2+\sum_{i,j\in\Aa{}}\EE{Y_iY_j}
        =-\mu^2 + n!\sum_{j\in\Aa{}}\EE{Y_{i_0}Y_j} \\
    &=-\mu^2+ n!\sum_{j\in\Aa{0}}\EE{Y_{i_0}Y_j}
        +n!\sum_{k=1}^{n-1}\sum_{j\in\Aa{k}}\EE{Y_{i_0}Y_j}.
\end{align*}

Note that: if $j\in \Aa{0}$ then
\[\EE{Y_{i_0}Y_j}=\widetilde{p}^{2n_0}p_n^{2n-2n_0-2};\]
and, for $1\leq k\leq n-1$, if $j\in \Aa{k}$ then
\[\EE{Y_{i_0}Y_j}= p_n^{2n-2-k}  \tupla{\frac{\widetilde{p}}{p_n}}^{2n_0-s} \leq \tupla{\frac{\widetilde{p}}{p_n}}^{n_0} p_n^{2n-2-k},\]
where $0\leq s\leq \min\set{k,n_0}$ is the number of vertices in $i_{0}\cap j\cap L$.
Let $T(n,k)$ denotes the number of paths from $\rt$ to ${\bf 1}$ that intersect $i_0$ in exactly $k-1$ interior nodes. Thus

\begin{align*}
 Var(Y)&\leq-\mu^2+ n!T(n,1)\widetilde{p}^{2n_0}p_n^{2n-2n_0-2}
        +n!\tupla{\frac{\widetilde{p}}{p_n}}^{n_0}
        \sum_{k=1}^{n-1}T(n,k+1)p_n^{2n-2-k}\\
        &=-\mu^2+ \mu^2\frac{T(n,1)}{n!}
        +\mu^2\tupla{\frac{p_n}{\widetilde{p}}}^{n_0}
        \sum_{k=2}^{n}\frac{T(n,k)}{n!p_n^{k-1}}.
\end{align*}
In order to prove (2) we use the follows bounds for $T(n,k)$ obtained in \cite{Martinsson2014}. In Proposition~2.5 in \cite{Martinsson2014} it was proved that
\[
 n!(1-O(1/n))\leq T(n,1)\leq n!.
\]
 In Equation~3.4 of \cite{Martinsson2014} it was proved that, if $np_n\rightarrow \infty$ as $n\rightarrow \infty$, then
\begin{equation}
 \sum_{k=2}^n\frac{T(n,k)}{n!p_n^{k-1}}=o(1)
\end{equation}

from which
\[
Var(Y) \leq -\mu^2+ \mu^2 
        +\mu^2\tupla{\frac{p_n}{\widetilde{p}}}^{n_0} o(1)=o(\mu^2).
\]

For proving $(3)$ (as it was proved in Lemma~2.2 of \cite{Martinsson2014}), recall that if $X$ is a random variable with finite expected value and finite nonzero second moment, then
\[\pr{X\neq 0}\geq \frac{\EE{X}^2}{\EE{X^2}}. \]
Thus, as
\[\pr{Y\neq 0}\geq \frac{\EE{Y}^2}{\EE{Y^2}}
= \frac{\EE{Y}^2}{\EE{Y}^2 +o(\EE{Y}^2)},\]
then $\lim_{n \rightarrow \infty} \pr{Y\geq 1}=1$.

Proof of (1) follows from
\[
 \pr{\left|\frac{Y}{\mu}-1\right|<\epsilon } \geq 1-\frac{Var(Y)}{\epsilon^2\mu^2}=1-o(1).
\]

\end{proof}

\begin{proof}[\bf Proof of Proposition~\ref{prop:hypercube:nicefdp}]
 Let $f$, $X$ and $\omega_{\eta,\theta}$ be as in Proposition~\ref{prop:hypercube:nicefdp}.
 Let $I_{\max}$ be an interval with length $\theta$ such that \[C=\int_{I_{\max}}f(x)d(x).\]
 Let $n_0$ be such that $\theta\cdot n_0 /2$ is greater than the length of $Supp(f)$; without loss of generality we assume that $n>n_0+3$.
 Let \[C_{\min}=\min\set{\int_{I}f(x)d(x): \text{ length}(I)=\theta/2},\]  where $I$ denotes a closed interval.
 As $f$ is continuous on $Supp(f)$ and $Supp(f)$ is connected and bounded, then $C_{\min}>0$.

 In the following we construct Bernoulli processes, $\zeta$ and $\nu_{C,C_{\min},n_0+3}$ on $\hycn{n}$, where $\nu_{C,C_{\min},n_0+3}$ is as in Lemma~\ref{lem:2BernoulliHypercube} and $\zeta$ is in the same probability space than $\omega_{\eta,\theta}$. Then we prove that $\stdom{\zeta}{\nu_{C,C_{\min},n_0+3}}$ and  open paths in $\zeta$ are accessible paths in $\omega_{\eta,\theta}$. Thus, by Lemma~\ref{lem:2BernoulliHypercube}, the number of open paths in $\zeta$ is $\sim \Omega(n! C^{n-1})$ and \[X \sim \Omega(n!C^{n-1}).\]

 See Figure~\ref{fig:Intervals} and note that there exist a sequence of intervals $I_1,I_2,\ldots,I_{n_0}$, each one with length $\theta/2$, such that:
 \begin{itemize}
  \item The support of $f$ is equals to $\bigcup_{i=1}^{n_0}{I_i}$.
  \item For $1\leq i\leq n_0-1$, if $x_i\in I_i$ and $x_{i+1}\in I_{i+1}$ then $|x_{i+1}-x_i|< \theta$.
  \item There is no an $x$ in the $supp(f)\setminus I_{n_0}$ to the left of $I_{n_0}$, and there is no an $x$ in the $Supp(f)\setminus I_{1}$ to the right of $I_{1}$.
  \item There is a $k$ such that $I_k\subseteq I_{\max}\subseteq I_{k+1}\cup I_k \cup I_{k-1}$ and $I_{k+1}, I_k, I_{k-1}$ are in this order from left to right.
 \end{itemize}

 \begin{figure}[htb]
	\centering
	\includegraphics[width=.7\linewidth]{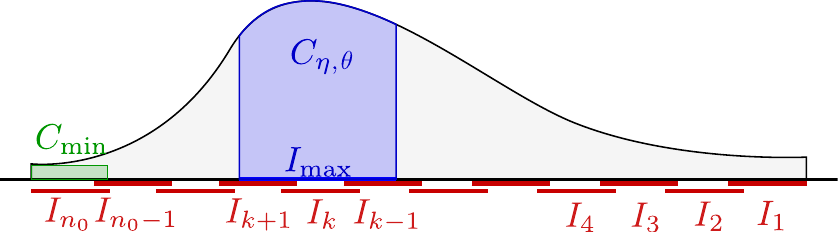}
	\caption{An illustration of $C_{\eta,\theta}$, $C_{\min}$ and intervals in proof of Proposition~\ref{prop:hypercube:nicefdp}.}
	\label{fig:Intervals}
 \end{figure}%

  Let $\zeta$ be a Bernoulli process on $\hycn{n}$, where, for each vertex $v_d$ at distance $d$ to $\rt$,
\[\zeta(v_d)=\begin{cases}
          1 & \text{ if } d=0 \text{ or } d=n,\\
          1 & \text{ if } \eta(v_d)\in I_{d} \text{ and } 1\leq d\leq k+1,\\
          1 & \text{ if } \eta(v_d)\in I_{\max} \text{ and } k+2\leq d\leq n-(n_0-k+2),\\
          1 & \text{ if } \eta(v_d)\in I_{d-(n-n_0)} \text{ and } n-(n_0-k+1) \leq d<n,\\
          0 & \text{ otherwise.}  \\
         \end{cases}
\]

 Let $P=v_0\rightarrow v_1\rightarrow \cdots \rightarrow v_n $ be an open path in $\zeta$. We claim that $P$ is accessible in $\omega_{\eta,\theta}$.
 \begin{itemize}
  \item As there is no an $x$ in the $Supp(f)\setminus I_{1}$ to the right of $I_{1}$, $\eta(v_0)$ is in $I_1$ or is to the left of $I_{1}$. Thus, whatever the value of $\eta(v_0)$,  $\eta(v_{1})-\eta(v_0)>-\theta/2$ and
  \begin{align*}
        \omega_{\eta,\theta}(v_{1})-\omega_{\eta,\theta}(v_{0})=&\eta(v_{1})-\eta(0)+\theta\cdot d(v_{1})- \theta\cdot d(v_0)\\
        =& \eta(v_{1})-\eta(v_0)+\theta\\
        \geq& -\theta/2+\theta>0.
  \end{align*}

   \item Let $1\leq i \leq n-2$. As edges in $\hycn{n}$ always point to vertices further away from a vertex $v_0$, $d(v_{i+1})=d(v_{i})+1$. As any two vertices in $I_i$ and $I_{i+1}$ are at distance smaller than  $\theta$, then  $\eta(v_{i+1})-\eta(v_i)>-\theta$. Thus
   \begin{align*}
        \omega_{\eta,\theta}(v_{i+1})-\omega_{\eta,\theta}(v_{i})=&\eta(v_{i+1})-\eta(v_i)+\theta\cdot d(v_{i+1})- \theta\cdot d(v_i)\\
        =& \eta(v_{i+1})-\eta(v_i)+\theta\\
        >& -\theta+\theta=0.
   \end{align*}

  \item As there is no an $x$ in the $supp(f)\setminus I_{n_0}$ to the left of $I_{n_0}$, $\eta(v_n)$ is in $I_{n_0}$ or is to the right of $I_{n_0}$. Thus, whatever the value of $\eta(v_n)$,  $\eta(v_{n})-\eta(v_{n-1})>-\theta/2$ and
  \begin{align*}
        \omega_{\eta,\theta}(v_{n})-\omega_{\eta,\theta}(v_{n-1})=&\eta(v_{n})-\eta(n-1)+\theta\cdot d(v_{n})- \theta\cdot d(v_{n-1})\\
        =& \eta(v_{n})-\eta(v_{n-1})+\theta\\
        \geq& -\theta/2+\theta>0.
  \end{align*}
\end{itemize}

Thus, open paths in $\zeta$ implies accessible paths in $\omega_{\eta,\theta}$.

See the definition of $\nu_{p_n\widetilde{p},n_0}$ in Lemma~\ref{lem:2BernoulliHypercube} and note that
\begin{align*}
 \pr{\zeta(v_d)=1} &=\begin{cases}
          1 & \text{ if } d=0 \text{ or } d=n,\\
          \pr{\eta(v_d)\in I_{d}} &  \text{ if }  1\leq d\leq k+1,\\
          \pr{\eta(v_d)\in I_{\max}} & \text{ if }   k+2\leq d\leq n-(n_0-k+2),\\
          \pr{\eta(v_d)\in I_{d-(n-n_0)}} &  \text{ if }  n-(n_0-k+1) \leq d<n.\\
         \end{cases}\\
         & \geq \begin{cases}
          1 & \text{ if } d=0 \text{ or } d=n,\\
          C_{\min} &  \text{ if }  1\leq d\leq k+1 \text{ or } n-(n_0-k+1) \leq d<n, \\
          C & \text{ if }   k+2\leq d\leq n-(n_0-k+2).
         \end{cases}\\
         & = \pr{\nu_{C,C_{\min},n_0+3}(v_d)=1}.
\end{align*}

Thus $\stdom{\zeta}{\nu_{C,C_{\min},n_0+3}}$ and the proof follows as above.

\end{proof}

\section{Results on infinite graphs}\label{sec:inf:graphs}

In this section we show our results on infinite graphs. First we prove theoretical results on  $\Zd$ and $\Zda$. Then we show experimental results on regular trees, $\Zd$ and $\Zda$.

\subsection{Theoretical results on infinite graphs}

 Let $G$ be an infinite graph and $\nu_p$ be a Bernoulli process on $G$.
 Recall that, in this case, long paths means paths that starts at the source and  are infinite.
 Like the finite case, we denote by
\[\ppberd{G}{p}=\text{Probability of having a long open path in } \nu_p.\]
 Unlike the finite case, we say that there is \emph{Bernoulli percolation} in $(G,p)$, if $\ppberd{G}{p}>0$.
 Note that Lemma~\ref{lem:basic_coupling} also holds on infinite graphs. In this section, we use Lemma~\ref{lem:basic_coupling} and some known bounds for site percolation on $\Zd$ and $\Zda$, in order to obtain bounds for $\theta$ for having RMF percolation on $\Zd$ and $\Zda$.

\begin{cor} \label{cor:L2}
 Let $F$ be any distribution and $\theta>0$. If  $\maxC{F}{\theta}>0.75$, then
 \[\pprmf{\Zd}{F}{\theta}>0 \hspace{1 em} \text{ and }  \hspace{1 em}  \pprmf{\Zda}{F}{\theta}>0.\]
\end{cor}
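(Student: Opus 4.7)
The plan is to derive the corollary directly from Theorem~\ref{thm:coupling_Perc_infinite} together with the known upper bounds on the critical threshold for oriented site percolation on $\Zd$ and $\Zda$. The only actual content beyond those inputs is the selection of an intermediate Bernoulli parameter $p$, so the proof will be short.

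First, since by hypothesis $\maxC{F}{c}>0.75$, I would fix some $p$ with $0.75<p\leq \maxC{F}{c}$; such a $p$ exists because the set $(0.75,\maxC{F}{c}]$ is nonempty. Applying Theorem~\ref{thm:coupling_Perc_infinite} with this $p$, first to $G=\Zd$ and then to $G=\Zda$, yields
\[
\pprmf{\Zd}{F}{c}\geq \ppberd{\Zd}{p}\quad\text{and}\quad \pprmf{\Zda}{F}{c}\geq \ppberd{\Zda}{p}.
\]
So it suffices to verify that the right-hand sides are strictly positive whenever $p>0.75$.

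For this I would invoke the known upper bounds on the critical probability for oriented site percolation, as surveyed in \cite{DirPercZd2002}: namely, $p_c^{\text{site}}(\Zd)\leq 3/4$ and $p_c^{\text{site}}(\Zda)\leq 3/4$ (the latter being easier, since $\Zda$ already contains $\Zd$ as a subgraph under an obvious embedding of sources, so percolation on $\Zd$ at parameter $p$ automatically gives percolation on $\Zda$ at the same $p$). Thus for any $p>0.75$ one has $\ppberd{\Zd}{p}>0$ and $\ppberd{\Zda}{p}>0$, which combined with the displayed inequalities gives the conclusion.

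The main (and essentially only) obstacle is to cite the correct site-percolation threshold results; once those are quoted, no further probabilistic work is required. A minor subtlety worth noting is the matching of conventions: the coupling of Theorem~\ref{thm:coupling_Perc_infinite} produces open paths in the same oriented graph in which we study RMF accessibility, so there is no graph-theoretic mismatch between the Bernoulli model used for the thresholds and the RMF model we are controlling. Finally, if a reader prefers to avoid the containment argument for $\Zda$, one can instead simply quote the numerical upper bound for $\Zda$ directly from \cite{DirPercZd2002}, which also lies below $0.75$; either route closes the proof.
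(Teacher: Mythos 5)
Your proposal is correct and follows the same route as the paper: apply Theorem~\ref{thm:coupling_Perc_infinite} with some $p$ satisfying $0.75<p\leq\maxC{F}{c}$ and then invoke the known bound that oriented site percolation on $\Zd$ and $\Zda$ occurs for $p>0.75$ (the paper cites Liggett's bound rather than the survey, but the content is identical). The extra observation about embedding $\Zd$ into $\Zda$ is harmless but not needed.
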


Results in Corollary~\ref{cor:L2} follow from Theorem~\ref{thm:coupling} and the bounds for site percolation on $\Zd$ and $\Zda$ obtained in \cite{BoundsZ2percliggett1995}: if $p>0.75$ then $\ppberd{\Zda}{p}>0$ and $\ppberd{\Zd}{p}>0$.

\subsection{Experimental results on infinite graphs}

We performed Monte Carlo simulations of the RMF accessibility percolation on $2$-regular trees, $3$-regular trees, $\Zd$ and $\Zda$. The labels of nodes were taken from a \emph{Uniform(0,1)} distribution, and the values of $\theta$ were varied from $0$ to $1$ in steps of $0.001$. Twenty thousand Monte Carlo runs were performed for each value of $\theta$.
Those graphs were tested until hight $125$, $250$, $500$, $1000$ and $2000$.

\subsubsection*{Simulations on regular Trees}
Let $\rtree{n}$ denotes the $n$-regular tree and $U(0,1)$ denotes the \emph{Uniform(0,1)} distribution. The result of these simulations are Illustrated in Figure~\ref{fig:Simul_2_3_ary_unif}.
 \begin{figure}[htb]
	\centering
	\includegraphics[width=.38\linewidth]{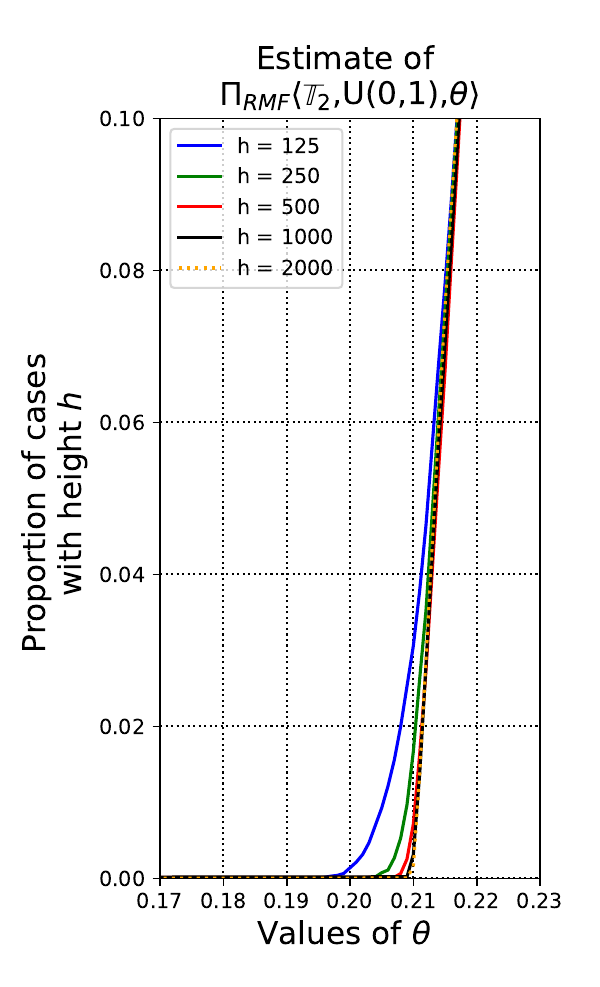}\vspace{1cm}
	\includegraphics[width=.38\linewidth]{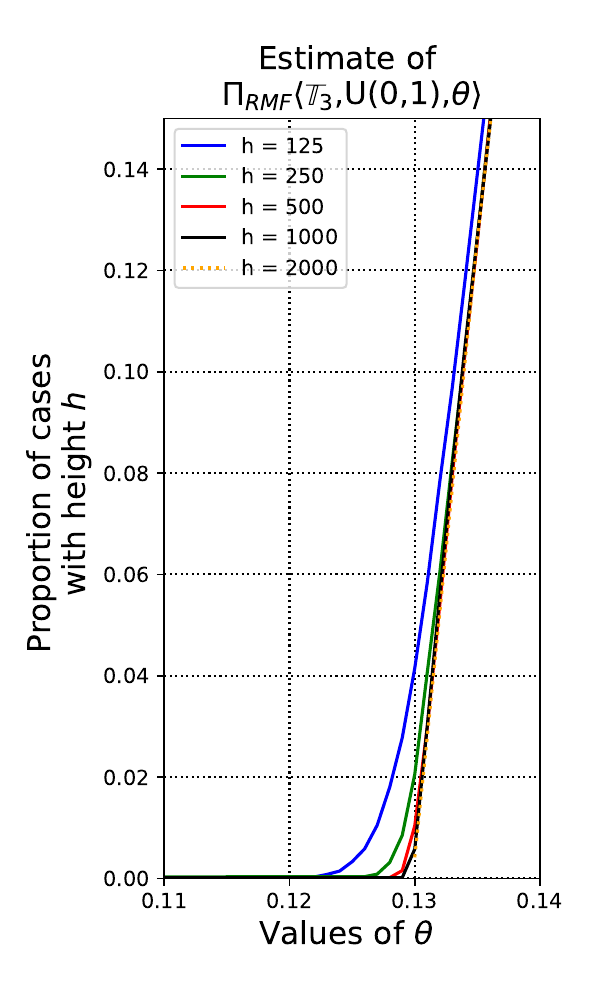}
	\caption{Simulations on $2$-regular trees and $3$-regular trees where nodes have distribution \emph{Uniform(0,1)}. In both cases, for different values of $\theta$, it is illustrated the proportion of cases for which there is an accessible path with determined height. Twenty thousand Monte Carlo runs were performed for each value of $\theta$.}
	\label{fig:Simul_2_3_ary_unif}
 \end{figure}%

In the case of $2$-regular trees: for $\theta<0.2$, there are no tracks that reach hight $250$; for $\theta>0.22$ there are tracks that reach hight $2.000$. It suggest that
\[\pprmfd{\rtree{2}}{U(0,1),0.2}=0 \text{ and }\pprmfd{\rtree{2}}{U(0,1),0.22}>0.\]
Similarly, for the case of $3$-regular trees we have that
\[\pprmfd{\rtree{3}}{U(0,1),0.12}=0 \text{ and }\pprmfd{\rtree{3}}{U(0,1),0.14}>0.\]

In comparison to the critical behavior of Bernoulli percolation: on $\rtree{2}$ it is known that $p_c=1/2$ \cite{ubralArboles}, but for RMF accessibility percolation it should be between $0.2$ and $0.22$; on $\rtree{3}$ it is known that  $p_c=1/3$ \cite{ubralArboles}, but for RMF accessibility percolation it should  be between $0.12$ and $0.14$.

\subsubsection*{Simulations on $\Zd$ and $\Zda$}

The result simulations on $\Zd$ and $\Zda$ are Illustrated in Figure~\ref{fig:Simul_L2_L2alt_unif}.
 \begin{figure}[htb]
	\centering
	\includegraphics[width=.38\linewidth]{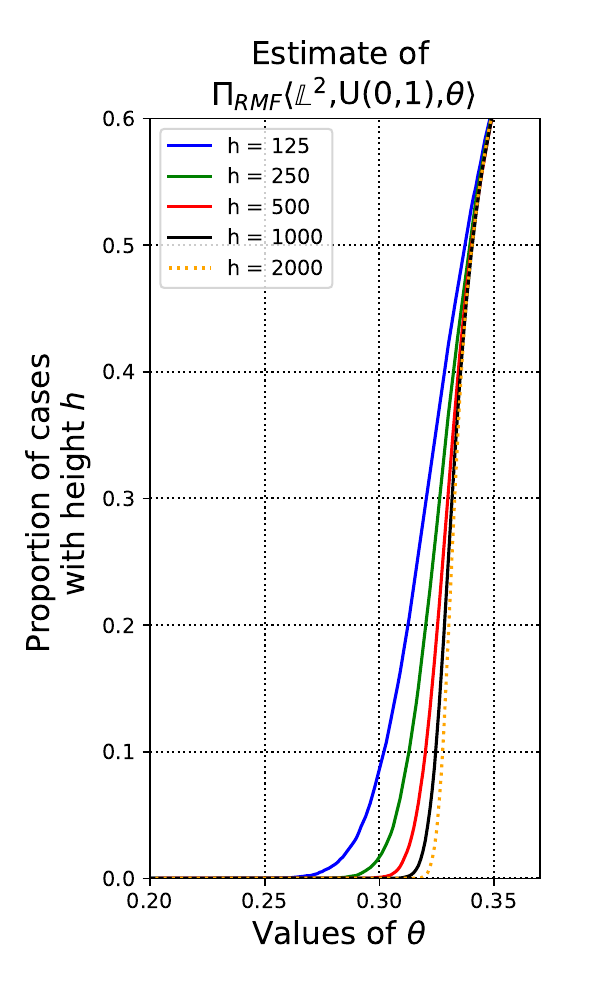}\vspace{1cm}
	\includegraphics[width=.38\linewidth]{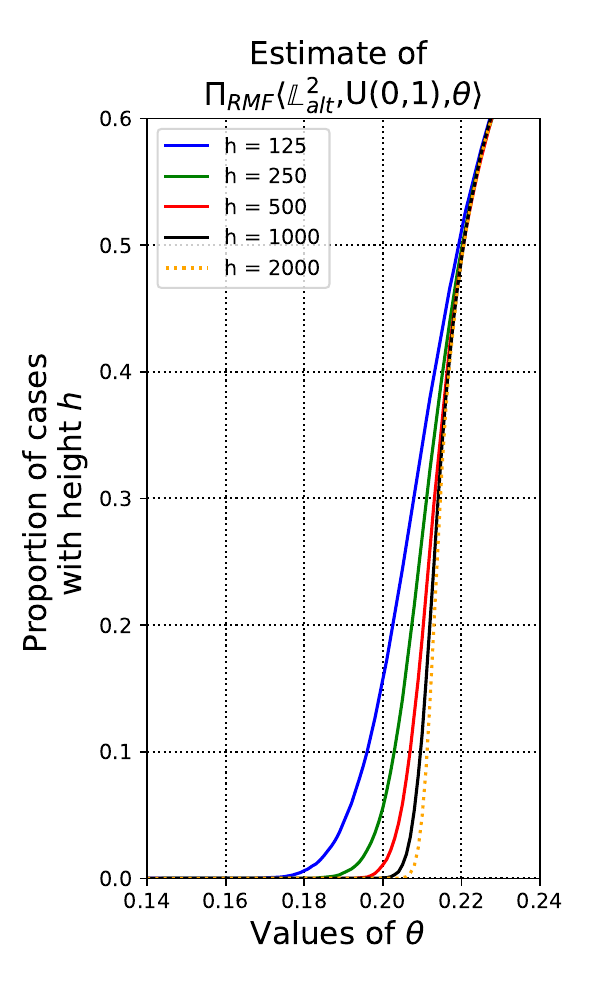}
	\caption{Simulations on $\Zd$ and $\Zda$ where nodes have distribution \emph{Uniform(0,1)}. In both cases, for different values of $\theta$, it is illustrated the proportion of cases for which there is an accessible path with determined height. Twenty thousand Monte Carlo runs were performed for each value of $\theta$.}
	\label{fig:Simul_L2_L2alt_unif}
 \end{figure}%

In the case of $\Zd$: for $\theta<0.3$, there are no tracks that reach hight $1000$; for $\theta>0.33$ there are tracks that reach hight $2.000$. It suggest that
\[\pprmfd{\Zd}{U(0,1),0.3}=0 \text{ and }\pprmfd{\Zd}{U(0,1),0.33}>0.\]
Similarly, for the case of $\Zda$ we have that
\[\pprmfd{\Zda}{U(0,1),0.19}=0 \text{ and }\pprmfd{\Zda}{U(0,1),0.22}>0.\]

In comparison to the critical behavior of Bernoulli percolation: on $\Zd$ it is obtained in $p_c=0.7054$  \cite{Jensen}, but for RMF accessibility percolation it should be between $0.3$ and $0.33$; on $\Zda$ it is obtained in $p_c=0.5956$  \cite{Jensen}, but for RMF accessibility percolation it should be  between $0.19$ and $0.22$.

\noindent\textbf{Acknowledgments:}  The authors would like to thank  the anonymous referees for their careful reading, criticism and suggestions which helped us to considerably improve the paper.

\bibliographystyle{plain} \bibliography{CouplingRMF.bib}

\begin{thebibliography}{10}

\bibitem{RMF-DEF}
Takuyo Aita, Hidefumi Uchiyama, Tetsuya Inaoka, Motowo Nakajima, Toshio Kokubo,
  and Yuzuru Husimi.
\newblock Analysis of a local fitness landscape with a model of the rough mt.
  fuji-type landscape: application to prolyl endopeptidase and thermolysin.
\newblock {\em Biopolymers.}, 54(1):64--79, 2000.

\bibitem{10.2307/43864013}
Julien Berestycki, \'{E}ric Brunet, and Zhan Shi.
\newblock The number of accessible paths in the hypercube.
\newblock {\em Bernoulli}, 22(2):653--680, 2016.

\bibitem{JKAJ2011}
Jasper Franke, Alexander Klözer, J.~Arjan G.~M. de~Visser, and Joachim Krug.
\newblock Evolutionary accessibility of mutational pathways.
\newblock {\em PLOS Computational Biology}, 7(8):1--9, 2011.

\bibitem{FK2012}
Jasper Franke and Joachim Krug.
\newblock Evolutionary accessibility in tunably rugged fitness landscapes.
\newblock {\em Journal of Statistical Physics}, 148(4):706--723, 2012.

\bibitem{JGJ2010}
Jasper Franke, Gregor Wergen, and Joachim Krug.
\newblock Records and sequences of records from random variables with a linear
  trend.
\newblock {\em Journal of Statistical Mechanics: Theory and Experiment},
  2010(10):P10013, 2010.

\bibitem{DirPercZd2002}
Geoffrey Grimmett and Philipp Hiemer.
\newblock {\em Directed Percolation and Random Walk}, pages 273--297.
\newblock Birkh{\"a}user Boston, Boston, MA, 2002.

\bibitem{Martinsson2014}
Peter Hegarty and Anders Martinsson.
\newblock On the existence of accessible paths in various models of fitness
  landscapes.
\newblock {\em Ann. Appl. Probab.}, 24(4):1375--1395, 2014.

\bibitem{Jensen}
Iwan Jensen.
\newblock Low-density series expansions for directed percolation on square and
  triangular lattices.
\newblock {\em Journal of Physics A: Mathematical and General}, 29(22):7013,
  1996.

\bibitem{kingman1978}
John~FC Kingman.
\newblock A simple model for the balance between selection and mutation.
\newblock {\em Journal of Applied Probability}, 15(1):1--12, 1978.

\bibitem{krug2021accessibility}
Joachim Krug.
\newblock Accessibility percolation in random fitness landscapes.
\newblock In {\em Probabilistic Structures in Evolution}, pages 1--22. E. Baake
  and A. Wakolbinger {EMS} Press, 2021.

\bibitem{Li2017}
Li~Li.
\newblock Phase transition for accessibility percolation on hypercubes.
\newblock {\em Journal of Theoretical Probability}, 31(4):2072--2111, 2018.

\bibitem{BoundsZ2percliggett1995}
Thomas~M Liggett.
\newblock Survival of discrete time growth models, with applications to
  oriented percolation.
\newblock {\em The Annals of Applied Probability}, pages 613--636, 1995.

\bibitem{ubralArboles}
Russell Lyons.
\newblock {Random Walks and Percolation on Trees}.
\newblock {\em The Annals of Probability}, 18(3):931 -- 958, 1990.

\bibitem{Neidhart2014}
Johannes Neidhart.
\newblock {\em Fitness Landscapes, Adaptation and Sex on the Hypercube}.
\newblock PhD thesis, Universit{\"a}t zu K{\"o}ln, 2014.

\bibitem{NK2013}
S.~Nowak and J.~Krug.
\newblock Accessibility percolation on n-trees.
\newblock {\em EPL (Europhysics Letters)}, 101(6):66004, 2013.

\bibitem{ParkKrug2008}
Su-Chan Park and Joachim Krug.
\newblock Evolution in random fitness landscapes: the infinite sites model.
\newblock {\em Journal of Statistical Mechanics: Theory and Experiment},
  2008(04):P04014, 2008.

\bibitem{RZ2013}
Matthew Roberts and Lee Zhao.
\newblock Increasing paths in regular trees.
\newblock {\em Electronic Communications in Probability}, 18:1--10, 2013.

\bibitem{SMK2013}
Ivan Szendro, Martijn Schenk, Jasper Franke, Joachim Krug, and J~Arjan
  de~Visser.
\newblock Quantitative analyses of empirical fitness landscapes.
\newblock {\em Journal of Statistical Mechanics Theory and Experiment},
  2013(01):P01005, 2013.

\bibitem{thorisson2000coupling}
H.~Thorisson.
\newblock {\em Coupling, Stationarity, and Regeneration}.
\newblock Probability and Its Applications. Springer New York, 2000.

\bibitem{WDDH2006}
Daniel~M. Weinreich, Nigel~F. Delaney, Mark~A. DePristo, and Daniel~L. Hartl.
\newblock Darwinian evolution can follow only very few mutational paths to
  fitter proteins.
\newblock {\em Science}, 312(5770):111--114, 2006.

\bibitem{WWC2005}
Daniel~M. Weinreich, Richard~A. Watson, and Lin Chao.
\newblock Perspective: Sign epistasis and genetic constraint on evolutionary
  trajectories.
\newblock {\em Evolution; international journal of organic evolution},
  59(6):1165--1174, 2005.

\end{thebibliography}

\begin{appendices}
\section{Notation}\label{sec:notation}

Throughout this paper, we will employ the following notation. Let $g,h: \NN \rightarrow \RR^+$ be any two functions.
\begin{enumerate}[label=(\roman*)]
 \item $g(n) \sim h(n)$ means that $\lim_{n\rightarrow \infty} \frac{g(n)}{h(n)}=1$.
 \item $g(n)\gtrsim h(n)$ means that $\limsup_{n \rightarrow \infty} \frac{g(n)}{h(n)} \geq 1$.
 \item $g(n)\lesssim h(n)$ means that $h(n) \gtrsim g(n)$.
 \item $g(n)=O(h(n))$ means that $\limsup_{n \rightarrow \infty } \frac{g(n)}{h(n)} < \infty$.
 \item $g(n)=\Omega (h(n))$ means that $h(n)=O(g(n))$.
 \item $g(n)=o(h(n))$ means that $\lim_{n \rightarrow \infty} \frac{g(n)}{h(n)}=0$.
\end{enumerate}

Let $\set{g(n)}_{n=1}^\infty$, $\set{h(n)}_{n=1}^\infty$ be two sequences of random variables.
\begin{enumerate} [label=(\roman*)]
\setcounter{enumi}{6}
 \item We write $g(n) \sim h(n)$ if, for all $\epsilon_1, \epsilon_2 >0$ and $n$ sufficiently long,
 \[\pr{\left|\frac{g(n)}{h(n)} -1 \right| < \epsilon_1} > 1-\epsilon_2. \]
 \item We write $g(n) \gtrsim h(n)$ if, for all $\epsilon_1, \epsilon_2 >0$ and $n$ sufficiently long,
 \[\pr{\frac{g(n)}{h(n)} > 1-\epsilon_1} > 1-\epsilon_2. \]
\end{enumerate}
\end{appendices}

\end{document}